\documentclass[3p, a4paper, 11pt]{elsarticle}

\usepackage{amssymb}
\usepackage{amsmath}
\usepackage{amsthm}
\usepackage{algorithm}
\usepackage{algorithmic}
\usepackage{float}
\usepackage[capitalize]{cleveref}
\usepackage{graphicx}
\usepackage{subcaption}
\usepackage{booktabs}
\usepackage[most]{tcolorbox} 
\journal{XXX}

\begin{document}
\begin{frontmatter}

\title{Riemannian optimal reduction for linear systems with quadratic outputs} 

\author[author1]{Xiaolong Wang}
\author[author1]{Chenglong Liu}
\affiliation[author1]{
organization={School of Mathematics and Statistics},
addressline={Northwestern Polytechnical University}, 
city={Xi'an},
postcode={710129}, 
state={Shaanxi},
country={China}}

\begin{abstract}
This paper presents an $H_2$-optimal model order reduction (MOR) method for linear 
systems with quadratic outputs based on Riemannian optimization. The $H_2$-optimal 
MOR is 
formulated as an optimization problem in which the optimization 
variables are selected directly as the coefficient matrices of reduced models. 
The product manifold is defined properly to impose the stability condition for 
reduced 
models. By exploiting the geometric properties of the product manifold, we derive an 
explicit formula for Riemannian gradient of the objective function, and then a 
limited-memory Riemannian BFGS method is adopted to solve the resulting optimization 
problem 
iteratively. In contrast to selecting projection matrices, optimizing coefficient 
matrices of reduced models reduces the amount of variables dramatically. 
Numerical simulation results demonstrate that reduced models  
accurately approximate the original system and exhibit superior performance in terms 
of $H_2$ error, which confirms the effectiveness of the proposed algorithm.

\end{abstract}
\begin{keyword}
linear dynamical system \sep model order reduction \sep $H_2$ norm \sep Riemannian 
optimization \sep product manifold 
\end{keyword}
\end{frontmatter}




\section{Introduction}
\label{section1}

Accurate modeling of large-scale complex systems is crucial in fields such as control engineering and signal processing.
However, these high-fidelity, high-dimensional models often pose significant 
challenges in the simulation, including immense computational complexity and 
substantial storage requirements, hindering their practical application.
Model order reduction (MOR) addresses these issues by constructing low-dimensional 
approximate models.
The primary goal of MOR is to preserve the essential input-output dynamics and key 
characteristics of original systems, thereby significantly reducing computational 
and storage demands and enhancing tractability and efficiency.
A variety of MOR techniques have been established.
Balanced truncation (BT) method is a standard benchmark for evaluating 
the accuracy and speed of new MOR algorithms \cite{Moore1981}.
Moment matching based on Krylov subspaces has been also well studied in the past 
decades \cite{Astolfi2010}.
For further details on these methods, see \cite{Benner2017,Antoulas2005}.
\par

In some engineering problems, the dynamical systems yield observables that are 
formulated 
as the combinations of the states and the sample variances or deviations, resulting 
in models with quadratic outputs.
Researchers have proposed numerous methods for MOR of linear systems with quadratic 
outputs (LQO).
Early, the multi-input single-output (MISO) LQO system was reformulated as a linear 
multi-input multi-output (MIMO) system. 
This reformulation enables the application of the standard MOR techniques, such as 
BT \cite{VanBeeumen2010} and moment-matching methods \cite{Benner2015}, to LQO 
systems.
More recently, some researchers focus on exploiting the direct MOR methods for LQO 
systems, avoiding any lifting or linearization procedures. Peter Benner et al. 
proposed a BT algorithm for LQO systems based on a specially defined quadratic 
output 
observability Gramian \cite{Benner2021}.
This kind of methods guarantees the asymptotic stability of reduced models and 
provides a 
posteriori $H_2$ error bound, establishing a foundation for subsequent research in 
this area \cite{Gosea2019,Gosea2022}.
\par

The $H_2$-norm is a standard metric in the analysis and synthesis of dynamical 
systems.
The $H_2$ optimal MOR aims to minimize the $H_2$-norm error between the 
original systems and reduced models \cite{Xu2013,Yan1999}.
However, the primitive minimization for the $H_2$-norm may result in unstable 
reduced models, even if the original system is stable.
A prominent approach is to reformulate the optimization problem with stability 
conditions as an unconstrained optimization on Riemannian manifolds, where the 
manifold structure inherently ensures the required constraints \cite{Absil2008}.
A convergent $H_2$-optimal MOR algorithm based on product manifold geometry has been 
provided for linear systems \cite{Sato2016}, while a 
bilateral iterative algorithm leveraging Grassmann manifolds has been given in 
\cite{Zeng2015}. The $H_2$-optimal MOR for bilinear systems also is discussed in 
\cite{Xu2019} and \cite{Xu2015} based on Stiefel and Grassmann manifolds. More 
extensions based on Riemannian optimization to other systems with specific 
structures can be 
found in \cite{Yang2017,Jiang2019}.
\par

In this paper we consider the $H_2$-optimal MOR of LQO systems.
We first formulate the MOR procedure as an optimization problem to minimize the 
$H_2$-norm error between the original system and reduced models.
However, the explicit constraint to preserve the stability of original systems makes 
the optimization problem intractable.
Consequently, we equivalently reformulate the optimization problem as an 
unconstrained optimization on a specific product manifold, which is composed of the 
vector space of skew-symmetric matrices, the manifold of symmetric positive definite 
(SPD) matrices, two Euclidean spaces, and the vector space of symmetric matrices.
The Riemannian gradient of the objective function on product manifold is obtained in 
theory with the aid of the geometric properties of Riemannian manifold.
Then a Riemannian BFGS method is adopted to solve the optimization problem 
iteratively, leading to stable reduced models that inherent the quadratic output 
structure of the original system.
\par

This paper is organized as follows.
Section 2 introduces LQO systems and the $H_2$ norm.  
In Section 3, the $H_2$-optimal 
MOR is reformulated as an $H_2$-optimization problem on the product manifold. The 
essential Riemannian geometric properties of the product 
manifold are also presented, which are crucial for our framework.
In Section 4, the explicit expression for the Riemannian gradient of the objective 
function is derive in theory, and the Riemannian BFGS is employed to solve the 
optimization problem on product manifold, leading to an efficient MOR algorithm.
Section 5 presents the numerical results to demonstrate the feasibility and 
effectiveness of the proposed algorithm. Finally, some conclusions are given in 
Section 6. 

\subsection*{Notations}
We denote the sets of real numbers by $\mathbb{R}$, the $n$ dimensional real vector space by $\mathbb{R}^{n}$, and the space of $n \times n$ real matrices by $\mathbb{R}^{n \times n}$.
The vector space of skew-symmetric matrices is referred as $\text{Skew}(n) \subset 
\mathbb{R}^{n \times n}$, the vector space of symmetric matrices as $\text{Sym}(n) 
\subset \mathbb{R}^{n \times n}$, and the manifold of SPD matrices as 
$\text{Sym}_+(n) \subset \mathbb{R}^{n \times n}$.
We denote the set of stable matrices by $\mathbb{S}^{n\times 
n}:=\{A\in\mathbb{R}^{n\times n} \mid \text{all eigenvalues of } A \text{ have the 
negative real parts}\}$.
The trace of $A$, $\text{tr}(A)$, is the sum of its diagonal elements.
The symmetric and skew-symmetric parts of $A$ are defined as $\text{sym}(A) := 
\frac{A+A^{\top}}{2}$ and $\text{skew}(A) := \frac{A-A^{\top}}{2}$, respectively.
$\exp(A)$ denotes the standard matrix exponential of $A$.
For a vector $v \in \mathbb{R}^n$, $||v||$ denotes the Euclidean norm.
For a matrix $A \in \mathbb{R}^{n \times n}$, $||A||$ and $||A||_F$ denote the 
induced $2$-norm (spectral norm) and the Frobenius norm, respectively.
Given a measurable function $f: [0, \infty) \to \mathbb{R}^n$, its $L_2$ and 
$L_\infty$ norms are given by $||f||_{L_2}$ and $||f||_{L_\infty}$, respectively.

\section{Preliminary}
\label{section2}

Consider an asymptotically stable LQO system, which is formulated by the state-space description as
\begin{equation}\label{2.1.1}
    \Sigma : \left\{
    \begin{aligned}
        & \dot{x}(t) = Ax(t) + Bu(t), \quad x(0) = 0, \\
        & y(t) = C_o x(t) + \begin{pmatrix}
        x(t)^\top M_1 x(t) \\
        \dots \\
        x(t)^\top M_p x(t)
        \end{pmatrix},
    \end{aligned}
    \right.
\end{equation}
where $x(t) \in \mathbb{R}^n$, $u(t) \in \mathbb{R}^m$, and $y(t) \in \mathbb{R}^p$ are the state vector, input vector, and scalar output, respectively, defined for $t \in \left[0, t_{\mathrm{end}}\right]$.
The matrices $A\in\mathbb{S}^{n\times n}$, $B\in\mathbb{R}^{n\times m}$, $C_o\in\mathbb{R}^{p\times n}$, and $M_i\in\mathbb{R}^{n\times n}$ are constant matrices.
We assume that the system is of high order but with significantly fewer inputs than 
states, i.e., $m \ll n$.
The matrices $M_i$ are assumed to be symmetric, as for any $M_i \in \mathbb{R}^{n 
\times n}$, the quadratic term $x(t)^{\top}M_i x(t)$ can be replaced by $x(t)^{\top} 
\left(\frac{M_i +M_i^{\top}}{2}\right) x(t)$.
For simplicity of exposition, we concentrate on the following MISO system
\begin{equation}\label{2.1.2}
    \Sigma : \left\{
    \begin{aligned}
        & \dot{x}(t) = Ax(t) + Bu(t), \quad x(0) = 0, \\
        & y(t) = C x(t) + x(t)^\top M x(t),
    \end{aligned}
    \right.
\end{equation}
where $C\in\mathbb{R}^{1\times n}$, and $M\in\mathbb{R}^{n\times n}$.
However, all theoretical results and the proposed algorithm presented in this paper 
can be straightforwardly extended to \eqref{2.1.1} with some proper modifications.
\par

Our goal is to construct a reduced model (ROM) of order $r$ ($r \ll n$) that 
properly approximates the input-output behavior of the original system \eqref{2.1.2}.
The ROM is described by the following state-space representations
\begin{equation}\label{2.2}
	\hat{\Sigma}:\left\{\begin{array}{l}\dot{\hat{x}}(t)=\hat{A} \hat{x}(t)+\hat{B} u(t), \\\hat{y}(t)=\hat{C}\hat{x}(t)+\hat{x}(t)^{\top}\hat{M}\hat{x}(t),\end{array}\right.
\end{equation}
where $\hat{x}(t) \in \mathbb{R}^r$, $u(t) \in \mathbb{R}^m$, and $y(t) \in \mathbb{R}$ are the state vector, input vector, and scalar output, respectively, defined for $t \in \left[0, t_{\mathrm{end}}\right]$.
The ROM matrices $\hat{A}\in\mathbb{R}^{r\times r}$, $\hat{B}\in\mathbb{R}^{r\times 
m}$, $\hat{C}\in\mathbb{R}^{1\times r}$, and $\hat{M}\in\mathbb{R}^{r\times r}$ are 
constant matrices. Note that the stability and the quadratic output structure of 
(\ref{2.1.2}) will be ensured during the MOR procedure. 
\par

As detailed in \cite{Wang2026}, the controllability Gramian $P$ of the system \eqref{2.1.2} is defined as the unique, symmetric, positive semidefinite solution to the algebraic Lyapunov equation
\begin{equation}\label{2.3}
    AP+PA^\top+BB^\top=0.
\end{equation}
Correspondingly, the generalized observability Gramian $Q$ associated with the quadratic output is defined as the unique symmetric positive semidefinite solution to the generalized algebraic Lyapunov equation
\begin{equation}\label{2.4}
    A^\top Q+QA+C^\top C+MPM=0,
\end{equation}
where $P$ is the controllability Gramian from \eqref{2.3}.
The existence and uniqueness of these positive semidefinite solutions $P$ and $Q$ are guaranteed by the stability of the matrix $A$.
\par

The $H_2$-norm of the LQO system \eqref{2.1.2} is defined via its Volterra kernels as
\begin{equation}
    \|\Sigma\|_{H_2} = 
    \left(\int_0^\infty\|h_1(\sigma)\|_2^2\mathrm{d}\sigma+\int_0^\infty\int_0^\infty
    \|h_2(\sigma_1,\sigma_2)\|_2^2\mathrm{d}\sigma_1\mathrm{d}\sigma_2\right)^{\frac12},
\end{equation}
where $h_1(\sigma)=Ce^{A\sigma}B$ and 
$h_2(\sigma_{1},\sigma_{2})=\mathrm{vec}\left(B^{\top}e^{A^{\top}\sigma_{1}}Me^{A\sigma_{2}}B\right)^{\top}$
 are the linear and quadratic kernels, respectively.
Alternatively, this norm can be expressed algebraically in terms of the generalized observability Gramian $Q$.
As shown in \cite{Reiter2025}, the $H_2$-norm is given by
\begin{equation}\label{2.5}
    \|\Sigma\|_{H_{2}} = \sqrt{\mathrm{tr}\left(B^{\top}QB\right)}.
\end{equation}
\par

To evaluate the quality of the ROM $\hat{\Sigma}$, we consider the output error $y(t) - \hat{y}(t)$ when both the FOM $\Sigma$ and the ROM $\hat{\Sigma}$ are driven by the same input $u(t)$.
A key relationship, presented in \cite{Reiter2025}, bounds the $L_{\infty}$-norm of this time-domain error by the $H_2$-norm of the error system, $\Sigma_e := \Sigma-\hat{\Sigma}$.
The bound is given by
\begin{equation}
    \|y-\hat{y}\|_{L_\infty} := \sup_{t\geq0}|y(t)-\hat{y}(t)| \leq \|\Sigma_e\|_{H_2} (\|u\|_{L_2}+\|u\otimes u\|_{L_2}).
\end{equation}
This inequality provides a strong motivation for $H_2$-optimal model reduction.
It demonstrates that minimizing the $H_2$-norm of the error system provides a bound on the peak output error for any input $u(t)$ with finite $L_2$ and $L_2$-Kronecker norms.
\par

\section{$H_2$-optimal MOR problem}
\label{section3}
Building on the definition of the $H_2$-norm for LQO systems, the $H_2$-optimal MOR 
can be cast as an optimization problem.
As shown in \cite{Wang2026}, the $H_2$-optimal model reduction problem is formulated as the following optimization problem
\begin{equation}\label{3.1}
    \min_{\substack{\hat{A}\in\mathbb{S}^{r\times r},\,\hat{B}\in\mathbb{R}^{r\times m}\\\hat{C}\in\mathbb{R}^{1\times r},\,\hat{M}\in \text{Sym}_+(n)}}f(\hat{A},\hat{B},\hat{C},\hat{M}) = \|\Sigma-\hat{\Sigma}\|_{H_2}^2.
\end{equation}
With the constraint on the stability of ROM, the direct solution of this problem is 
challenging.
The primary difficulty stems from the highly non-convex nature of the stability constraint set, $\mathbb{S}^{r \times r}$ \cite{Sato2019}.
To overcome this challenge, we reformulate the problem as an equivalent, tractable optimization problem on a Riemannian manifold.
\par

For any asymptotically stable linear systems \eqref{2.1.2}, it can be 
equivalently expressed in the form
\begin{equation}\label{3.2}
    {\Sigma_2}:\left\{\begin{array}{l}
        \dot{x}(t)=(J-R)Qx(t)+Bu(t), \\
        y(t)=Cx(t)+x(t)^{\top}Mx(t),
    \end{array}\right.
\end{equation}
where $J$ and $R$ are defined based on $A$ and $Q$ as
\begin{gather}
    J := \frac{1}{2}(AQ^{-1} - Q^{-1}A^{\top}) \in \text{Skew}(n), \\
    R := -\frac{1}{2}(AQ^{-1} + Q^{-1}A^{\top}) \in \text{Sym}_+(n).
\end{gather}
Next, system \eqref{3.2} can be transformed into another equivalent form which is more suitable for reduction.
To achieve this, we first note that the matrix $Q \in \text{Sym}_+(n)$ from system \eqref{3.2} is SPD.
It therefore admits a unique Cholesky decomposition $Q = LL^{\top}$ 
\cite{Golub2012}, where $L \in \mathbb{R}^{n \times n}$ is a lower triangular matrix 
with positive diagonal entries.
We then apply the coordinate transformation $T = L^{\top}$, i.e., $\tilde{x}(t) = 
L^{\top} x(t)$.
This yields the equivalent system
\begin{equation}\label{3.3}
    {\Sigma_3}:\left\{\begin{array}{rcl}
        \dot{\tilde{x}}(t) & = & (\tilde{J} - \tilde{R})\tilde{x}(t) + \tilde{B}u(t), \\
        y(t) & = & \tilde{C}\tilde{x}(t) + \tilde{x}(t)^{\top}\tilde{M}\tilde{x}(t),
    \end{array}\right.
\end{equation}
where the transformed matrices are given by
\begin{align*}
    \tilde{J} &:= T J T^{-1} = (L^{\top}) J (L^{\top})^{-1} = L^{\top} J L, \\
    \tilde{R} &:= T R T^{-1} = (L^{\top}) R (L^{\top})^{-1} = L^{\top} R L, \\
    \tilde{B} &:= T B = L^{\top} B, \\
    \tilde{C} &:= C T^{-1} = C (L^{\top})^{-1} = C (L^{-1})^{\top}, \\
    \tilde{M} &:= T^{-{\top}} M T^{-1} = L^{-1} M (L^{-1})^{\top}.
\end{align*}
This transformation is a state-space isomorphism, so systems \eqref{3.2} and \eqref{3.3} are equivalent.
If we simply choose $Q$ as the identity matrix, then \eqref{2.1.2} takes the form
\begin{equation}\label{3.4}
    {\Sigma}:\left\{\begin{array}{l}
        \dot{x}(t)=(J-R)x(t)+Bu(t), \\
        y(t)=Cx(t)+x(t)^{\top}Mx(t),
    \end{array}\right.
\end{equation}
where $J = \text{skew}(A) \in \text{Skew}(n)$, $R = \text{sym}(A) \in \text{Sym}_+(n)$.
Therefore it is natural to construct the ROM of \eqref{2.1.2} in the following form
\begin{equation}\label{3.5}
	\hat{\Sigma}:\left\{\begin{array}{l}
    \dot{\hat{x}}(t)=(\hat{J}-\hat{R}) \hat{x}(t)+\hat{B} u(t), \\
    \hat{y}(t)=\hat{C}\hat{x}(t)+\hat{x}(t)^{\top}\hat{M}\hat{x}(t),
  \end{array}\right.
\end{equation}
where $\hat{x}(t) \in \mathbb{R}^r$, $u(t) \in \mathbb{R}^m$, and $y(t) \in \mathbb{R}$ are the state vector, input vector, and scalar output, respectively, defined for $t \in \left[0, t_{\mathrm{end}}\right]$.
The ROM matrices $\hat{J}\in\text{Skew}(r)$, $\hat{R}\in\text{Sym}_+(r)$, $\hat{B}\in\mathbb{R}^{r\times m}$, $\hat{C}\in\mathbb{R}^{1\times r}$, and $\hat{M}\in\mathbb{R}^{r\times r}$ are constant matrices.
For convenience, we use $\hat{A}$ to denote $\hat{J} - \hat{R}$.
\par

For this analysis, we construct the error system $\Sigma_{error} := \Sigma - \hat{\Sigma}$ between the FOM $\Sigma$ \eqref{2.1.2} and the ROM $\hat{\Sigma}$ \eqref{3.5}.
This error system $\Sigma_{error}$ possesses a state-space realization
\begin{equation}\label{3.6}
    \Sigma_{error}:\left\{\begin{array}{l}
        \dot{x}_e(t)=A_e x_e(t)+B_e u(t), \\
        y_e(t)=C_e x_e(t)+x_e(t)^{\top}M_e x_e(t),
    \end{array}\right.
\end{equation}
where $x_e(t) := \begin{pmatrix} x(t) \\ \hat{x}(t) \end{pmatrix}$, and the corresponding system matrices $(A_e, B_e, C_e, M_e)$ are defined as
\begin{equation}\label{3.7}
    (A_e, B_e, C_e, M_e) =
    \left(
    \begin{pmatrix}
        A & 0 \\
        0 & \hat{J}-\hat{R}
    \end{pmatrix},
    \begin{pmatrix}
        B \\ \hat{B}
    \end{pmatrix},
    \begin{pmatrix}
        C & -\hat{C}
    \end{pmatrix},
    \begin{pmatrix}
        M & 0 \\
        0 & -\hat{M}
    \end{pmatrix}
    \right),
\end{equation}
where $(A,B,C,M)$ and $(\hat{J},\hat{R},\hat{B},\hat{C},\hat{M})$ are the matrices for the FOM $\Sigma$ and ROM $\hat{\Sigma}$, respectively.
A direct computation using \eqref{3.6} and \eqref{3.7} confirms that the output $y_e(t)$ is indeed the difference between the original and reduced outputs: $y_e(t) = y(t) - \hat{y}(t)$.
Crucially, the error system $\Sigma_{error}$ defined in \eqref{3.6} is itself an LQO system, so its $H_2$-norm can be computed by applying \eqref{2.5}
\begin{equation}\label{3.8}
    \|\Sigma_{error}\|_{H_2} = \sqrt{\mathrm{tr}(B_e^\top Q_e B_e)}.
\end{equation}
Here, $Q_e$ is the generalized observability Gramian of $\Sigma_{error}$, satisfying
\begin{equation}
    A_e^\top Q_e + Q_e A_e + C_e^\top C_e + M_e P_e M_e = 0, \label{3.10}
\end{equation}
and $P_e$ is the controllability Gramian of $\Sigma_{error}$ satisfying
\begin{equation}
    A_e P_e + P_e A_e^\top + B_e B_e^\top = 0. \label{3.9}
\end{equation}
To further analyze the $H_2$-norm of the error system, we partition the error Gramians $P_e$ and $Q_e$ according to the structure of $A_e$
\begin{equation}\label{3.11}
    P_e = \begin{pmatrix} P & X \\ X^\top & \hat{P} \end{pmatrix}, \quad Q_e = \begin{pmatrix} Q & Y \\ Y^\top & \hat{Q} \end{pmatrix}.
\end{equation}
By substituting the augmented system matrices \eqref{3.7} and the partitions \eqref{3.11} into the Lyapunov equations \eqref{3.9} and \eqref{3.10}, the system of equations decouples.
The top-left blocks, $P$ and $Q$, are precisely the controllability and generalized observability Gramians of FOM $\Sigma$, satisfying
\begin{align}
    AP + PA^\top + BB^\top &= 0, \label{3.12} \\
    A^\top Q + QA + C^\top C + MPM &= 0. \label{3.13}
\end{align}
The bottom-right blocks, $\hat{P}$ and $\hat{Q}$, are the Gramians of the ROM $\hat{\Sigma}$.
Specifically, $\hat{P}$ is the ROM controllability Gramian, and $\hat{Q}$ is the ROM generalized observability Gramian, satisfying
\begin{align}
    (\hat{J}-\hat{R})\hat{P} + \hat{P}(\hat{J}-\hat{R})^\top + \hat{B}\hat{B}^\top &= 0, \label{3.14} \\
    (\hat{J}-\hat{R})^\top \hat{Q} + \hat{Q}(\hat{J}-\hat{R}) + \hat{C}^\top \hat{C} + \hat{M}\hat{P}\hat{M} &= 0. \label{3.15}
\end{align}
The off-diagonal blocks $X$ and $Y$ are the solutions to the following cross-coupling Sylvester equations
\begin{align}
    AX + X(\hat{J}-\hat{R})^\top + B\hat{B}^\top &= 0, \label{3.16} \\
    A^\top Y + Y(\hat{J}-\hat{R}) - C^\top \hat{C} - MX\hat{M} &= 0. \label{3.17}
\end{align}
With these sub-blocks, we can expand the $H_2$-norm expression from \eqref{3.8}.
Substituting $B_e$ and $Q_e$ yields
\begin{align*}
    \|\Sigma_{error}\|_{H_2}^2 &= \mathrm{tr}(B_e^\top Q_e B_e) \\
    &= \mathrm{tr}\left( \begin{pmatrix} B \\ \hat{B} \end{pmatrix}^\top \begin{pmatrix} Q & Y \\ Y^\top & \hat{Q} \end{pmatrix} \begin{pmatrix} B \\ \hat{B} \end{pmatrix} \right) \\
    &= \mathrm{tr}\left( \begin{pmatrix} B^\top & \hat{B}^\top \end{pmatrix} \begin{pmatrix} QB + Y\hat{B} \\ Y^\top B + \hat{Q}\hat{B} \end{pmatrix} \right) \\
    &= \mathrm{tr}\left( B^\top(QB + Y\hat{B}) + \hat{B}^\top(Y^\top B + \hat{Q}\hat{B}) \right) \\
    &= \mathrm{tr}(B^\top Q B + B^\top Y \hat{B} + \hat{B}^\top Y^\top B + \hat{B}^\top \hat{Q} \hat{B}).
\end{align*}
Using the cyclic property of the trace ($\mathrm{tr}(\hat{B}^\top Y^\top B) = \mathrm{tr}(B^\top Y \hat{B})$), this simplifies our cost function $f$.
The optimization problem is to minimize the squared $H_2$-norm error, defined as
\begin{equation}\label{3.18}
    f(\hat{J}, \hat{R}, \hat{B}, \hat{C}, \hat{M}) = \|\Sigma_{error}\|_{H_2}^2 = \mathrm{tr}(B^\top Q B + 2B^\top Y \hat{B} + \hat{B}^\top \hat{Q} \hat{B}),
\end{equation}
where the matrices $Q, Y, \hat{Q}$ depend on the optimization variables through the equations \eqref{3.13}, \eqref{3.17}, and \eqref{3.15}.
The $H_2$-optimal model reduction problem is thus formulated as the following constrained optimization problem
\begin{equation}\label{3.19}
    \min_{(\hat{J},\hat{R},\hat{B},\hat{C},\hat{M}) \in \mathcal{M}}f(\hat{J},\hat{R},\hat{B},\hat{C},\hat{M}).
\end{equation}
where the product manifold $\mathcal{M}$ is defined as $\mathcal{M} := \text{Skew}(r) \times \text{Sym}_+(r) \times \mathbb{R}^{r \times m} \times \mathbb{R}^{1 \times r} \times \text{Sym}(r)$.

\section{Riemannian optimization on the product manifold}
\label{section4}

In this section, we first extract the explicit expression for Riemannian gradient of 
(\ref{3.19}) based on the geometric properties of the product manifold 
$\mathcal{M}$, and then develop an optimization algorithm on the product manifold to 
produce stable reduced models iteratively.

\subsection{Riemannian geometry of the product manifold}
\label{section4.1}

Formally, a manifold is a couple $(\mathcal{X}, \mathcal{A}^+)$, where $\mathcal{X}$ 
is a set and $\mathcal{A}^+$ is a maximal atlas of $\mathcal{X}$ inducing a 
second-countable Hausdorff topology \cite{Absil2008,Boumal2023}.
The manifold $\mathcal{M}$ in our work is a product manifold, composed of the vector space of skew-symmetric matrices, the manifold of SPD matrices, two Euclidean spaces, and the vector space of symmetric matrices.
We now recall the geometric properties of these constituent manifolds, which 
facilitate the analysis on the product manifold $\mathcal{M}$ a lot.
\par

The set of skew-symmetric matrices $\text{Skew}(n)$ is a linear subspace of $\mathbb{R}^{n \times n}$.
It can be regarded as a flat Riemannian manifold, whose geometry is inherited from the ambient Euclidean space $\mathbb{R}^{n \times n}$.

\paragraph{Tangent Space and Riemannian Metric}
For any $X \in \text{Skew}(n)$, the tangent space $T_X\text{Skew}(n)$ is canonically identified with the subspace $\text{Skew}(n)$ itself, that is, $T_X\text{Skew}(n) \simeq \text{Skew}(n)$.
The Riemannian metric is the restriction of the standard Frobenius inner product
$$
\langle \xi, \eta \rangle_X := \text{tr}(\xi^\top \eta), \quad \text{for any } \xi, \eta \in T_X\text{Skew}(n).
$$

\paragraph{Riemannian Gradient}
The orthogonal projection from $\mathbb{R}^{n \times n}$ onto $T_X\text{Skew}(n)$ is given by the operator $\operatorname{skew}(\cdot)$.
Let $f: \text{Skew}(n) \to \mathbb{R}$ be a smooth function and $\bar{f}$ be its smooth extension to $\mathbb{R}^{n \times n}$.
The Riemannian gradient $\mathrm{grad}\,f(X)$ is the projection of the Euclidean gradient $\nabla \bar{f}(X)$ onto the tangent space
\begin{equation}
    \mathrm{grad}\,f(X) = \text{skew}(\nabla \bar{f}(X)).
\end{equation}

\paragraph{Retraction and Vector Transport}
Given the flat geometry of the manifold, the exponential map $\text{Exp}_X(\xi)$ coincides with the simplest possible retraction, which is vector addition
$$
R_X(\xi) = \text{Exp}_X(\xi) = X + \xi, \quad \text{for } \xi \in T_X\text{Skew}(n).
$$
Similarly, the vector transport $\mathcal{T}_\eta(\xi)$ is simply the identity map, as all tangent spaces are identical
$$
\mathcal{T}_\eta(\xi) = \xi.
$$

The set of symmetric matrices, denoted by $\text{Sym}(n)$, is also a linear subspace of $\mathbb{R}^{n \times n}$, so the Riemannian geometry is similarly inherited from the ambient Euclidean space $\mathbb{R}^{n \times n}$.
The set of SPD matrices, denoted by $\text{Sym}_+(n)$, is an open cone within the vector space of $\text{Sym}(n)$.
It is well-known that $\text{Sym}_+(n)$ is a Riemannian manifold \cite{Moakher2005,Vandereycken2009}.

\paragraph{Tangent Space and Riemannian Metric}
The tangent space at any point $X \in \text{Sym}_+(n)$, denoted $T_X\text{Sym}_+(n)$, is canonically identified with $\text{Sym}(n)$, that is, $T_X\text{Sym}_+(n) \simeq \text{Skew}(n)$.
We equip $\text{Sym}_+(n)$ with the Riemannian metric \cite{Jeuris2012}
\begin{equation}
    \langle \xi, \eta \rangle_X := \text{tr}(X^{-1}\xi X^{-1}\eta), \quad \text{for any } \xi, \eta \in T_X\text{Sym}_+(n).
\end{equation}

\paragraph{Riemannian Gradient}
The orthogonal projection from $\mathbb{R}^{n \times n}$ onto $T_X\text{Sym}_+(n)$ is given by the operator $\operatorname{sym}(\cdot)$.
Let $f: \text{Sym}_+(n) \to \mathbb{R}$ be a smooth function with a smooth extension $\bar{f}$ to $\mathbb{R}^{n \times n}$.
The Riemannian gradient $\mathrm{grad}\,f(X)$ with respect to the metric above is given by \cite{Obara2024}
\begin{equation}
    \mathrm{grad}\,f(X) = X \, \text{sym}(\nabla \bar{f}(X)) \, X,
\end{equation}
where $\nabla \bar{f}(X)$ is the Euclidean gradient of $\bar{f}$ at $X$.

\paragraph{Retraction and Vector Transport}
We utilize the exponential map as the retraction and its corresponding parallel transport. 
For the SPD manifold endowed with the affine-invariant metric, the exponential map at $X \in \text{Sym}_+(n)$ for a tangent vector $\eta \in T_X\text{Sym}_+(n)$ is given by
\begin{equation}
    R_X(\eta) = \text{Exp}_X(\eta) = X^{\frac{1}{2}} \exp\left(X^{-\frac{1}{2}} \eta X^{-\frac{1}{2}}\right) X^{\frac{1}{2}}.
\end{equation}

To transport a tangent vector $\xi \in T_X\text{Sym}_+(n)$ along the geodesic defined by $\eta$ to the new tangent space at $R_X(\eta)$, we employ the parallel transport.
The parallel transport is isometric, which preserves the Riemannian inner product of two transported vectors.
The closed-form expression of the parallel transport on $\text{Sym}_+(n)$ is given by
\begin{equation}
    \mathcal{T}_\eta(\xi) = E \xi E^T,
\end{equation}
where the transformation matrix $E$ is defined as
\begin{equation}
    E = X^{\frac{1}{2}} \exp\left(\frac{1}{2} X^{-\frac{1}{2}} \eta X^{-\frac{1}{2}}\right) X^{-\frac{1}{2}}.
\end{equation}

Using the geometric tools established for the constituent manifolds, we now define the geometry of the product manifold $\mathcal{M}$.
Let the manifold $\mathcal{M}$ be the product of the spaces for the optimization variables:
$$
\mathcal{M} := \text{Skew}(n) \times \text{Sym}_+(n) \times \mathbb{R}^{n \times m} \times \mathbb{R}^{1 \times n} \times \text{Sym}(n).
$$

\paragraph{Tangent Space and Riemannian Metric}
Let $x = (\hat{J}, \hat{R}, \hat{B}, \hat{C}, \hat{M}) \in \mathcal{M}$ denote a point on $\mathcal{M}$.
The tangent space $T_x\mathcal{M}$ is the product of the tangent spaces of each component
$$
T_x\mathcal{M} = \text{Skew}(n) \times \text{Sym}(n) \times \mathbb{R}^{n \times m} \times \mathbb{R}^{1 \times n} \times \text{Sym}(n).
$$
A tangent vector at $x$ is denoted by $\xi = (\xi_J, \xi_R, \xi_B, \xi_C, \xi_M) \in T_x\mathcal{M}$.
The Riemannian metric $\langle \cdot, \cdot \rangle_x$ at $x \in \mathcal{M}$ is defined as the sum of the metrics on the constituent manifolds.
For two tangent vectors $\xi, \eta \in T_x\mathcal{M}$
\begin{align}
    \langle \xi, \eta \rangle_x = & \mathrm{tr}(\xi_J^\top \eta_J) + \mathrm{tr}(\hat{R}^{-1}\xi_R \hat{R}^{-1}\eta_R) + \mathrm{tr}(\xi_B^\top \eta_B) + \mathrm{tr}(\xi_C \eta_C^\top) + \mathrm{tr}(\xi_M^\top \eta_M).
\end{align}

\paragraph{Riemannian Gradient}
Let $f: \mathcal{M} \to \mathbb{R}$ be a smooth cost function, and let $\bar{f}$ be its smooth extension to the ambient Euclidean space.
Let $\nabla \bar{f}(x) = (\nabla_{\hat{J}} \bar{f}, \nabla_{\hat{R}} \bar{f}, \nabla_{\hat{B}} \bar{f}, \nabla_{\hat{C}} \bar{f}, \nabla_{\hat{M}} \bar{f})$ be the Euclidean gradient of $\bar{f}$.
Here, we define the matrix $\nabla_{\hat{J}} \bar{f}$ as the partial derivative of $\bar{f}$ with respect to the matrix variable $\hat{J}$.
The $(i,j)$-th element of this matrix is given by $(\nabla_{\hat{J}} \bar{f})_{ij} = \frac{\partial \bar{f}}{\partial (\hat{J})_{ij}}$.
Other matrices are defined similarly.
The Riemannian gradient $\mathrm{grad} f(x)$ is the component-wise projection of $\nabla \bar{f}(x)$ onto $T_x\mathcal{M}$
$$
\mathrm{grad} f(x) = \left( \text{skew}(\nabla_{\hat{J}} \bar{f}), \quad \hat{R} \, 
\text{sym}(\nabla_{\hat{R}} \bar{f}) \, \hat{R}, \quad \nabla_{\hat{B}} \bar{f}, 
\quad \nabla_{\hat{C}} \bar{f}, \quad \text{sym}(\nabla_{\hat{M}} \bar{f}) \right).
$$

\paragraph{Retraction and Vector Transport}
The retraction $R_x(\xi)$ and vector transport $\mathcal{T}_{\eta}(\xi)$ are also defined component-wise
$$
R_x(\xi) = \left( \hat{J} + \xi_J, \quad \hat{R}^{\frac{1}{2}} \exp\left(\hat{R}^{-\frac{1}{2}} \xi_R \hat{R}^{-\frac{1}{2}}\right) \hat{R}^{\frac{1}{2}}, \quad \hat{B} + \xi_B, \quad \hat{C} + \xi_C, \quad \hat{M} + \xi_M \right),
$$
and the strictly isometric vector transport is defined as
$$
\mathcal{T}_{\eta}(\xi) = \left( \xi_J, \quad E_{\eta_R} \xi_R E_{\eta_R}^T, \quad \xi_B, \quad \xi_C, \quad \xi_M \right),
$$
where $E_{\eta_R}$ is given by
$$
E_{\eta_R} = \hat{R}^{\frac{1}{2}} \exp\left(\frac{1}{2} \hat{R}^{-\frac{1}{2}} \eta_R \hat{R}^{-\frac{1}{2}}\right) \hat{R}^{-\frac{1}{2}}.
$$
This provides the complete set of geometric tools required for optimization on $\mathcal{M}$.

\subsection{Riemannian optimal reduction based on the product manifold}
\label{section4.2}

Based on the geometric framework established in the previous subsection, we first 
derive the explicit expression for Riemannian gradient, 
and then present an iterative MOR algorithm based on the Riemannian BFGS method. 
A proposition is introduced first to reveal the trace properties of Sylvester 
equations.

\newtheorem{proposition}{Proposition}
\begin{proposition}
\label{prop1}
If $P$ and $Q$ satisfy $AP+PB+X=0$ and $A^\top Q+QB^\top+Y=0$, then it holds that $\mathrm{tr}(Y^\top P)=\mathrm{tr}(X^\top Q)$.
\end{proposition}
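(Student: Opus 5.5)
The plan is a direct trace computation. I will solve each Sylvester equation for its inhomogeneous term, substitute into the two traces, and match the resulting terms using only $\mathrm{tr}(Z)=\mathrm{tr}(Z^\top)$ and the cyclic property of the trace. No assumption on solvability or uniqueness of the equations is needed, since the identity only uses that $P$ and $Q$ satisfy them. The dimensions are implicitly fixed by the statement: $A$ is $n\times n$, $B$ is $k\times k$, $P,X$ are $n\times k$, and $Q,Y$ are $n\times k$.

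First, from the two equations write $X=-(AP+PB)$ and $Y=-(A^\top Q+QB^\top)$. Then
\begin{equation*}
\mathrm{tr}(Y^\top P) = -\mathrm{tr}(Q^\top A P) - \mathrm{tr}(B Q^\top P),
\end{equation*}
\begin{equation*}
\mathrm{tr}(X^\top Q) = -\mathrm{tr}(P^\top A^\top Q) - \mathrm{tr}(B^\top P^\top Q).
\end{equation*}

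Second, compare the expressions term by term.
\begin{itemize}
\item For the $A$-terms, $\mathrm{tr}(P^\top A^\top Q)=\mathrm{tr}\big((P^\top A^\top Q)^\top\big)=\mathrm{tr}(Q^\top A P)$.
\item For the $B$-terms, transposition followed by a cyclic permutation gives $\mathrm{tr}(B^\top P^\top Q)=\mathrm{tr}(Q^\top P B)=\mathrm{tr}(B Q^\top P)$.
\end{itemize}
Adding the two identities gives $\mathrm{tr}(Y^\top P)=\mathrm{tr}(X^\top Q)$.

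There is no genuine obstacle. The only point needing care is the bookkeeping of the transposes and the matrix dimensions, which must make each product square so that the cyclic permutations are legitimate. In particular, the second equation involves $B^\top$, not $B$; this is exactly what makes the $B$-terms pair up after one transpose and one cyclic shift.
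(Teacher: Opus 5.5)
Your proof is correct and complete. The paper states Proposition~1 without any proof and simply uses it in the derivation of Theorem~1, so there is no argument in the paper to compare yours against. Your direct substitution is the standard verification: the $A$-terms match after one transpose, and the $B$-terms match after a transpose and one cyclic shift. Your remarks that the dimensions are forced ($A$ is $n\times n$, $B$ is $k\times k$, $P,Q,X,Y$ are $n\times k$) and that no solvability or uniqueness assumption is needed are both accurate.

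An equivalent, slightly more conceptual phrasing uses the Sylvester operator $\mathcal{L}(P)=AP+PB$. Its Frobenius adjoint is $\mathcal{L}^*(Q)=A^\top Q+QB^\top$, so
$\mathrm{tr}(X^\top Q)=-\langle \mathcal{L}(P),Q\rangle_F=-\langle P,\mathcal{L}^*(Q)\rangle_F=\mathrm{tr}(P^\top Y)=\mathrm{tr}(Y^\top P)$.
This is just your two term-by-term identities packaged together. It also shows why the proposition drives the adjoint-variable construction ($S_1$, $S_2$, $K$, $L$) in the proof of Theorem~1.
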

With the aid of \cref{prop1}, the following theorem provides the Riemannian gradient of the cost function $f$ defined in \eqref{3.18}.
\par

\newtheorem{theorem}{Theorem}
\begin{theorem}
\label{theorem1}
Consider the asymptotically stable systems (\ref{2.1.2}) and (\ref{2.2}).
The Riemannian gradient $\mathrm{grad}f$ of the cost function on the product manifold $\mathcal{M}$ is
$$
2 \left( \mathrm{skew}(K^\top X + L\hat{P}), \; -\hat{R} \mathrm{sym}(K^\top X + 
L\hat{P})\hat{R}, \; K^\top B + L\hat{B}, \; C\hat{P} - CX, \; \hat{P}\hat{M}\hat{P} 
- X^\top M X \right),
$$
where $X, \hat{P}$ are the solutions to \eqref{3.16}, \eqref{3.14}, and $K, L$ are the solutions to the following Sylvester equations
\begin{align}
  A^\top K+K(\hat{J}-\hat{R})-C^\top\hat{C}-2MX\hat{M}=&0,\label{4.2.1} \\
	(\hat{J}-\hat{R})^\top L+L(\hat{J}-\hat{R})+\hat{C}^\top\hat{C}+2\hat{M}\hat{P}\hat{M}=&0.\label{4.2.2}
\end{align}
\end{theorem}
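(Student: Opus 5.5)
The plan is to compute the Euclidean gradient of $f$ in \eqref{3.18} by first-order perturbation of each variable, and then obtain the Riemannian gradient from the componentwise projection formula of Section \ref{section4.1}. Since $\hat{A}=\hat{J}-\hat{R}$, the chain rule gives $\nabla_{\hat{J}}\bar f=\nabla_{\hat{A}}\bar f$ and $\nabla_{\hat{R}}\bar f=-\nabla_{\hat{A}}\bar f$. So the first two components should follow from $\mathrm{skew}(\cdot)$ and $\hat{R}\,\mathrm{sym}(\cdot)\hat{R}$ applied to a single matrix $G_A:=\nabla_{\hat{A}}\bar f$. I expect $G_A=2(K^\top X+L\hat{P})$ after suitable transposition conventions. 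The term $\mathrm{tr}(B^\top QB)$ does not depend on the reduced variables, so only $2\,\mathrm{tr}(B^\top Y\hat{B})+\mathrm{tr}(\hat{B}^\top\hat{Q}\hat{B})$ needs to be differentiated.

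\textbf{Step 1 (adjoint reduction of $\delta f$).} Perturb all variables and differentiate \eqref{3.14}--\eqref{3.17}. This gives Sylvester/Lyapunov equations for $\delta X,\delta\hat{P},\delta Y,\delta\hat{Q}$. Each has the same operator as the original equation, and its right-hand side contains the variations $\delta\hat{A},\delta\hat{B},\delta\hat{C},\delta\hat{M}$ together with the lower-level variations $\delta X,\delta\hat{P}$ through the terms $MX\hat{M}$ and $\hat{M}\hat{P}\hat{M}$. I will apply \cref{prop1} to convert $\mathrm{tr}(\hat{B}B^\top\delta Y)$ and $\mathrm{tr}(\hat{B}\hat{B}^\top\delta\hat{Q})$ into traces against $X$ and $\hat{P}$, using that $X$ and $\hat{P}$ solve the dual equations \eqref{3.16} and \eqref{3.14} with right-hand sides $B\hat{B}^\top$ and $\hat{B}\hat{B}^\top$. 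After this step, the remaining unknown variations are $\delta X$ and $\delta\hat{P}$. They appear in terms of the form $\mathrm{tr}(X^\top M\,\delta X\,\hat{M})$ and $\mathrm{tr}(\hat{P}\hat{M}\,\delta\hat{P}\,\hat{M})$.

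\textbf{Step 2 (second adjoint layer, the main obstacle).} I will eliminate $\delta X$ and $\delta\hat{P}$ by applying \cref{prop1} a second time, now pairing the $\delta X$ equation with an adjoint solving an equation whose right-hand side is $MX\hat{M}$, and similarly for $\hat{P}$. The point is to recognize that the adjoint from Step 1 ($Y$, resp.\ $\hat{Q}$) and the new adjoint share the same Sylvester operator. Their sum is then exactly $K$ in \eqref{4.2.1}, resp.\ $L$ in \eqref{4.2.2}, which explains the factor $2$ in front of $MX\hat{M}$ and $\hat{M}\hat{P}\hat{M}$. The bookkeeping that is easiest to get wrong is the signs and transposes, in particular the $-C^\top\hat{C}$ and $-MX\hat{M}$ coupling in \eqref{3.17}. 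I will check these by verifying symmetry of $L$ and by a finite-difference sanity check of a scalar instance.

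\textbf{Step 3 (reading off the gradients).} Collecting terms should give expressions of the form $\delta f=2\,\mathrm{tr}\big((K^\top X+L\hat{P})^\top\delta\hat{A}\big)+2\,\mathrm{tr}\big((K^\top B+L\hat{B})^\top\delta\hat{B}\big)+\dots$. The $\hat{C}$ and $\hat{M}$ parts come directly from the source terms $-C^\top\hat{C}$, $\hat{C}^\top\hat{C}$, $-MX\hat{M}$ and $\hat{M}\hat{P}\hat{M}$, paired with $X$ and $\hat{P}$. These yield $2(C\hat{P}-CX)$ and $2(\hat{P}\hat{M}\hat{P}-X^\top MX)$, where the latter is already symmetric. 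The $\hat{B}$ part combines the explicit dependence of $f$ on $\hat{B}$ with the dependence through $X$ and $\hat{P}$. Finally, applying $\mathrm{skew}$, $-\hat{R}\,\mathrm{sym}(\cdot)\hat{R}$, the identity, the identity, and $\mathrm{sym}$ componentwise gives the stated formula.
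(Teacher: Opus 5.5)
Your proposal follows the paper's proof essentially step for step: apply \cref{prop1} once against $X$ and $\hat P$, then introduce auxiliary adjoints for the $MX\hat M$ and $\hat M\hat P\hat M$ couplings, and finally project componentwise. The paper calls these adjoints $S_1,S_2$ and sets $K=Y-S_1$, $L=\hat Q+S_2$; this is your ``sum'' up to the sign convention chosen for the auxiliary. One small point: the $\hat C$-component your computation actually produces is $2(\hat C\hat P-CX)$, as in the paper's proof, so the $C\hat P$ you copied from the statement is a dimensional typo ($C$ is $1\times n$, $\hat P$ is $r\times r$).
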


\begin{proof}
Let $x = (\hat{J}, \hat{R}, \hat{B}, \hat{C}, \hat{M}) \in \mathcal{M}$ be the current point, and $\xi = (\xi_J, \xi_R, \xi_B, \xi_C, \xi_M) \in T_x\mathcal{M}$ be a tangent vector.
Let $\bar{f}$ be the smooth extension to the ambient Euclidean space of $f$.
We compute the directional derivative $D\bar{f}(x)[\xi]$ by differentiating the cost function \eqref{3.18}
\begin{equation}\label{4.2.3}
    D\bar{f}(x)[\xi] = \mathrm{tr}\left( 2B^\top Y \xi_B + 2\hat{B}^\top \dot{Y}^\top B + 2\hat{B}^\top \hat{Q} \xi_B + \hat{B}^\top \dot{\hat{Q}} \hat{B} \right),
\end{equation}
where $\dot{Y} = DY(x)[\xi]$ and $\dot{\hat{Q}} = D\hat{Q}(x)[\xi]$ are the directional derivatives satisfying the linearized Sylvester and Lyapunov equations
\begin{align}
    A^\top \dot{Y} + \dot{Y}(\hat{J}-\hat{R}) &= \mathcal{N}_Y + M\dot{X}\hat{M}, \label{4.2.4} \\
    (\hat{J}-\hat{R})^\top \dot{\hat{Q}} + \dot{\hat{Q}}(\hat{J}-\hat{R}) &= \mathcal{N}_Q - \hat{M}\dot{\hat{P}}\hat{M}, \label{4.2.5}
\end{align}
where $\mathcal{N}_Y$ and $\mathcal{N}_Q$ collect the terms explicitly dependent on $\xi$
\begin{align*}
    \mathcal{N}_Y &= -Y(\xi_J - \xi_R) + C^\top \xi_C + M X \xi_M, \\
    \mathcal{N}_Q &= -[(\xi_J - \xi_R)^\top \hat{Q} + \hat{Q}(\xi_J - \xi_R) + \hat{C}^\top \xi_C + \xi_C^\top \hat{C} + \xi_M \hat{P} \hat{M} + \hat{M} \hat{P} \xi_M].
\end{align*}
We proceed by analyzing the trace terms involving $\dot{Y}$ and $\dot{\hat{Q}}$ separately, which motivates the construction of the adjoint variables $K$ and $L$.

\textbf{Step 1: Analysis of the $\dot{Y}$ term.}
Using the cyclic property of the trace and \cref{prop1} and substituting \eqref{4.2.4}, the term involving $\dot{Y}$ in \eqref{4.2.3} can be expanded as
\begin{equation}\label{4.2.6}
    2\mathrm{tr}(\hat{B}^\top \dot{Y}^\top B) = 2\mathrm{tr}(\hat{B} B^\top \dot{Y}) = - 2\mathrm{tr}(X^\top \mathcal{N}_Y) - 2\mathrm{tr}(X^\top M \dot{X} \hat{M}).
\end{equation}
The last term, $\mathrm{tr}(X^\top M \dot{X} \hat{M})$, couples the derivative $\dot{X} = DX(x)[\xi]$ with other matrices.
To handle this, an auxiliary adjoint variable $S_1$ satisfying the following Lyapunov equation is introduced
\begin{equation}\label{4.2.7}
    A^\top S_1 + S_1 (\hat{J}-\hat{R}) + M X \hat{M} = 0.
\end{equation}
Applying \cref{prop1} to the equation for $\dot{X}$ (linearization of \eqref{3.16}) and the equation for $S_1$ \eqref{4.2.7}, we obtain the identity
\begin{equation}
    \mathrm{tr}(X^\top M \dot{X} \hat{M}) = \mathrm{tr}((M X \hat{M})^\top \dot{X}) = \mathrm{tr}\left( S_1^\top [ X(\xi_J - \xi_R)^\top + B \xi_B^\top ] \right).
\end{equation}
Substituting this back into \eqref{4.2.6}, we can combine $Y$ and $S_1$.
Defining $K := Y - S_1$, and noting that subtracting \eqref{4.2.7} from \eqref{3.17} yields the defining equation \eqref{4.2.1} for $K$, we derive
\begin{equation}\label{4.2.8}
    2\mathrm{tr}(\hat{B}^\top \dot{Y}^\top B) = 2\mathrm{tr}\left( (\xi_J - \xi_R)^\top K^\top X -\xi_B^\top S_1^\top B - \xi_C^\top C X - \xi_M X^\top M X \right).
\end{equation}

\textbf{Step 2: Analysis of the $\dot{\hat{Q}}$ term.}
Similarly, for the term involving $\dot{\hat{Q}}$, we obtain
\begin{equation}\label{4.2.9}
    \mathrm{tr}(\hat{B}^\top \dot{\hat{Q}} \hat{B}) = \mathrm{tr}(\hat{B} \hat{B}^\top \dot{\hat{Q}}) = - \mathrm{tr}(\hat{P}^\top \mathcal{N}_Q) + \mathrm{tr}(\hat{P}^\top \hat{M} \dot{\hat{P}} \hat{M}).
\end{equation}
To eliminate the dependence on $\dot{\hat{P}} = D\hat{P}(x)[\xi]$, we introduce a second auxiliary variable $S_2$ satisfying
\begin{equation}\label{4.2.10}
    (\hat{J}-\hat{R})^\top S_2 + S_2 (\hat{J}-\hat{R}) + \hat{M}\hat{P}\hat{M} = 0.
\end{equation}
Again, applying \cref{prop1} to the equations for $\dot{\hat{P}}$ and $S_2$ yields
\begin{equation}
    \mathrm{tr}(\hat{P}^\top \hat{M} \dot{\hat{P}} \hat{M}) = \mathrm{tr}(S_2^\top [(\xi_J - \xi_R)\hat{P} + \hat{P}(\xi_J - \xi_R)^\top + \xi_B \hat{B}^\top + \hat{B} \xi_B^\top]) = 2\mathrm{tr}\left((\xi_J - \xi_R)^\top S_2 \hat{P} + \xi_B^\top S_2 \hat{B} \right).
\end{equation}
We define $L := \hat{Q} + S_2$. Adding \eqref{4.2.10} to \eqref{3.15} results in the defining equation \eqref{4.2.2} for $L$.
Substituting these results into \eqref{4.2.9} allows us to express the contribution purely in terms of $L$
\begin{equation}\label{4.2.11}
    \mathrm{tr}(\hat{B}^\top \dot{\hat{Q}} \hat{B}) = 2\mathrm{tr}\left( (\xi_J - \xi_R)^\top L \hat{P} + \xi_B^\top S_2 \hat{B} + \xi_C^\top \hat{C} \hat{P} + \xi_M^\top \hat{P} \hat{M} \hat{P} \right).
\end{equation}

\textbf{Step 3: Synthesis of the Gradient.}
Finally, combining \eqref{4.2.8} and \eqref{4.2.11} into the original differential \eqref{4.2.3}, and grouping terms with respect to the variation components $(\xi_J, \xi_R, \xi_B, \xi_C, \xi_M)$, we obtain the Euclidean gradient $\nabla \bar{f}$
\begin{align*}
    D\bar{f}(x)[\xi] &= 2\mathrm{tr}\left( (\xi_J - \xi_R)^\top (K^\top X + L \hat{P}) \right) + 2\mathrm{tr}\left( \xi_B^\top (K^\top B + L \hat{B}) \right) \\
    &+ 2\mathrm{tr}\left( \xi_C (\hat{C} \hat{P} - C X)^\top \right) + 2\mathrm{tr}\left( \xi_M^\top (\hat{P} \hat{M} \hat{P} - X^\top M X) \right) = \langle \nabla \bar{f}(x), \xi \rangle_{\text{Euclidean}},
\end{align*}
where the components of the Euclidean gradient $\nabla \bar{f}(x) = (\nabla_{\hat{J}}\bar{f}, \nabla_{\hat{R}}\bar{f}, \nabla_{\hat{B}}\bar{f}, \nabla_{\hat{C}}\bar{f}, \nabla_{\hat{M}}\bar{f})$ are identified as
\begin{align*}
    \nabla_{\hat{J}}\bar{f} &= 2(K^\top X + L \hat{P}), \\
    \nabla_{\hat{R}}\bar{f} &= -2(K^\top X + L \hat{P}), \\
    \nabla_{\hat{B}}\bar{f} &= 2(K^\top B + L \hat{B}), \\
    \nabla_{\hat{C}}\bar{f} &= 2(\hat{C} \hat{P} - C X), \\
    \nabla_{\hat{M}}\bar{f} &= 2(\hat{P} \hat{M} \hat{P} - X^\top M X).
\end{align*}
The Riemannian gradient is obtained by projecting the Euclidean gradients onto the 
tangent spaces $\text{Skew}(r)$, $\text{Sym}(r)$, and $\text{Sym}_+(r)$ as described 
in Section \ref{section4.1}. It concludes the proof.
\end{proof}

Now we are in a position to solve the optimization problem \eqref{3.19} by a 
Limited-memory Riemannian BFGS (LRBFGS) method \cite{Huang2018}.
While the standard Riemannian BFGS method constructs a dense approximation of the Hessian matrix, it becomes computationally expensive for large-scale problems.
The LRBFGS method circumvents this by implicitly approximating the Hessian using a set of recent step and gradient variations, significantly reducing computational costs.
\par
Let $x_k \in \mathcal{M}$ and $\eta_k \in T_{x_k}\mathcal{M}$ denote the current iterate and the search direction at step $k$, respectively. The search direction is computed as
\begin{equation}
    \eta_k = -\mathcal{H}_k(\mathrm{grad}f(x_k)), \label{eq:search_dir}
\end{equation}
where $\mathcal{H}_k$ is the inverse Hessian approximation operator.
In LRBFGS, $\eta_k$ is computed via a Riemannian adaptation of the two-loop recursion using the most recent $m$ stored history vectors. For more details of the two-loop recursion, see \cite{Huang2018}.
The subsequent iterate $x_{k+1}$ is generated via the retraction map
\begin{equation}
    x_{k+1} = \mathcal{R}_{x_k}(t_k \eta_k), \label{eq:retraction_update}
\end{equation}
where $t_k > 0$ is the step length, determined by a backtracking line search satisfying the Armijo condition
\begin{equation}
    f(\mathcal{R}_{x_k}(t_k \eta_k)) \le f(x_k) + c_1 t_k \langle \mathrm{grad}f(x_k), \eta_k \rangle_{x_k}, \label{eq:armijo}
\end{equation}
with a constant $0 < c_1 < 1$.
To update the Hessian approximation, $s_k \in T_{x_{k+1}}\mathcal{M}$ and $y_k \in T_{x_{k+1}}\mathcal{M}$ are defined as
\begin{align}
    s_k &= \mathcal{T}_{t_k \eta_k}(t_k \eta_k), \label{eq:sk} \\
    y_k &= \mathrm{grad}f(x_{k+1}) - \mathcal{T}_{t_k \eta_k}(\mathrm{grad}f(x_k)), \label{eq:yk}
\end{align}
where $\mathcal{T}$ is the isometric vector transport.

To guarantee global convergence for non-convex functions and to preserve the positive definiteness of the Hessian approximation without the Wolfe conditions, we employ the cautious update rule
\begin{equation}
    \frac{\langle y_k, s_k \rangle_{x_{k+1}}}{\|s_k\|_{x_{k+1}}^2} \ge \vartheta(\|\mathrm{grad}f(x_k)\|_{x_k}), \label{eq:cautious}
\end{equation}
where $\vartheta(t) = c_{cautious} t$ is a strictly increasing threshold function with $c_{cautious} > 0$.
If the condition \eqref{eq:cautious} is satisfied, the pair $(s_k, y_k)$ is accepted into the memory buffer.
Because the tangent spaces change at each iteration, all previously stored history vectors $(s_i, y_i)$ in the memory buffer must be transported from $T_{x_k}\mathcal{M}$ to $T_{x_{k+1}}\mathcal{M}$ at every step.

The main steps of LRBFGS optimization procedure on the product manifold 
$\mathcal{M}$ is summarized in Algorithm \ref{alg1}.

\begin{algorithm}
\caption{$H_2$-optimal MOR of LQO systems via Riemannian BFGS Method on product 
manifold}
\label{alg1}
\begin{algorithmic}[1]
\REQUIRE The coefficient matrices of LQO systems, memory size $m$, tolerance $\epsilon > 0$.
\ENSURE ROM determined by the matrices $\hat{J}, \hat{R}, \hat{B}, \hat{C}, \hat{M}$.

\STATE \textbf{Initialize:} Choose an initial point $x_0 = (\hat{J}_0, \hat{R}_0, \hat{B}_0, \hat{C}_0, \hat{M}_0) \in \mathcal{M}$.
\STATE Compute the Riemannian gradient $\mathrm{grad}f(x_0)$ via \cref{theorem1}.
\FOR{$k = 0, 1, \dots$ (until convergence)}
    \STATE Compute search direction $\eta_k \gets -\mathcal{H}_k(\mathrm{grad}f(x_k))$ using the two-loop recursion.
    \STATE Choose a step length $t_k$ satisfying the Armijo condition \eqref{eq:armijo} via backtracking.
    \STATE Update the point $x_{k+1} \gets \mathcal{R}_{x_k}(t_k \eta_k)$.
    \STATE Calculate the new gradient $\mathrm{grad}f(x_{k+1})$.
    \STATE Compute the step variation $s_k$ and gradient variation $y_k$ via \eqref{eq:sk} and \eqref{eq:yk}.
    \STATE Transport all stored history vectors $s_i, y_i$ from $T_{x_k}\mathcal{M}$ to $T_{x_{k+1}}\mathcal{M}$ using $\mathcal{T}$.
    \IF{the cautious condition \eqref{eq:cautious} is satisfied}
        \STATE Add $(s_k, y_k)$ into the memory buffers and discard the oldest pair if the size exceeds $m$.
    \ENDIF
\ENDFOR

\STATE \textbf{Return:} $\hat{\Sigma} = (\hat{J}, \hat{R}, \hat{B}, \hat{C}, \hat{M})$.
\end{algorithmic}
\end{algorithm}

{\bf Remark 1} Note that throughout the proof of Theorem \ref{theorem1}, the 
variables $\hat{J}$ and $\hat{R}$ consistently appear as a coupled term $\hat{J} - 
\hat{R}$.
As a result, the Euclidean gradients of $\bar{f}$ with respect to these two variables are identical up to a sign difference.
Nevertheless, decoupling $\hat{A}$ into $\hat{J} - \hat{R}$ becomes justified at the manifold level.
Specifically, the set of skew-symmetric matrices $\mathrm{Skew}(r)$ is intrinsically a linear subspace of the Euclidean space $\mathbb{R}^{r \times r}$, whereas the set of SPD matrices $\mathrm{Sym}_+(r)$ constitutes a highly non-linear Riemannian manifold equipped with entirely different geometric properties.
Although their Euclidean gradient counterparts differ only by a sign, the Riemannian gradient of $f$ with respect to $\hat{J}$ and $\hat{R}$ take substantially different forms, thereby inducing distinct update behaviors during the iterations.

\section{Numerical results}
\label{section5}
This example comes from a one-dimensional advection-diffusion equation, given in 
\cite{Diaz2023,Reiter2025}.
The system dynamics are governed by the following partial differential equation
$$
\frac{\partial v(t, x)}{\partial t} - \alpha \frac{\partial^2 v(t, x)}{\partial x^2} + \beta \frac{\partial v(t, x)}{\partial x} = 0, \quad x \in (0, 1), \ t > 0,
$$
subject to the homogeneous initial condition $v(0, x) = 0$ for $x \in [0, 1]$, and the time-dependent boundary conditions
$$
v(t, 0) = u_0(t), \quad \alpha \frac{\partial v(t, x)}{\partial x}\bigg|_{x=1} = u_1(t).
$$
Capturing the spatial variance of the state's deviation from a predefined reference naturally leads to a quadratic output equation
$$
y(t) = \frac{1}{2} \int_0^1 \left( v(t, x) - 1 \right)^2 dx.
$$
Spatial discretization of the PDE is performed using a finite difference scheme over a grid of $n=300$ points.
The diffusion and advection coefficients are set to $\alpha = 0.01$ and $\beta = 1$, respectively.
This discretization yields a continuous-time MISO LQO system of order $n=300$, 
driven by $m=2$ inputs defined by $u(t) = [u_0(t), u_1(t)]^{\top}$.
\par

The reduced order is $r=10$ in the simulation.
We employ the BT procedure given in \cite{Benner2021} to get the 
initial point $x_0 
\in \mathcal{M}$ for the iteration.
The memory size for the two-loop recursion is restricted to $m = 10$. 
For the backtracking line search based on the Armijo condition, the sufficient decrease constant and the step-size decay factor are chosen as $c_1 = 10^{-4}$ and $\delta = 0.5$, respectively. 
Furthermore, the coefficient for the cautious update threshold is set to $c_{\text{cautious}} = 10^{-4}$. 
The algorithm is programmed to terminate when the relative Riemannian gradient norm falls below $10^{-2}$, or when the change in the cowt function between two consecutive iterations drops below $10^{-8}$.
\par

\begin{figure}[ht]
    \centering
    \begin{minipage}{0.45\textwidth}
    	\centering
        \includegraphics[width=\textwidth]{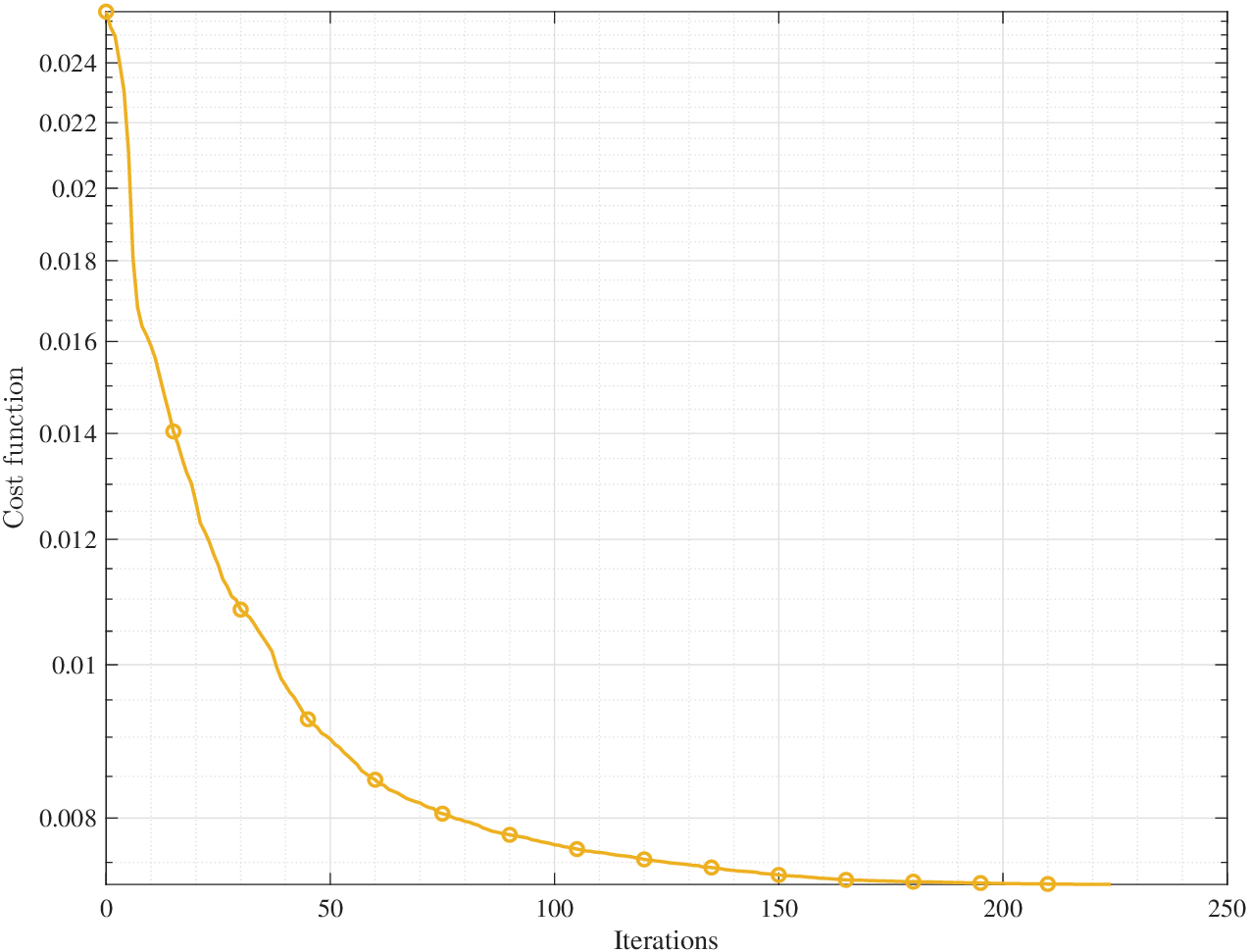}
        \caption*{(a)}
    \end{minipage}
    \hfill
    \begin{minipage}{0.45\textwidth}
    	\centering
        \includegraphics[width=\textwidth]{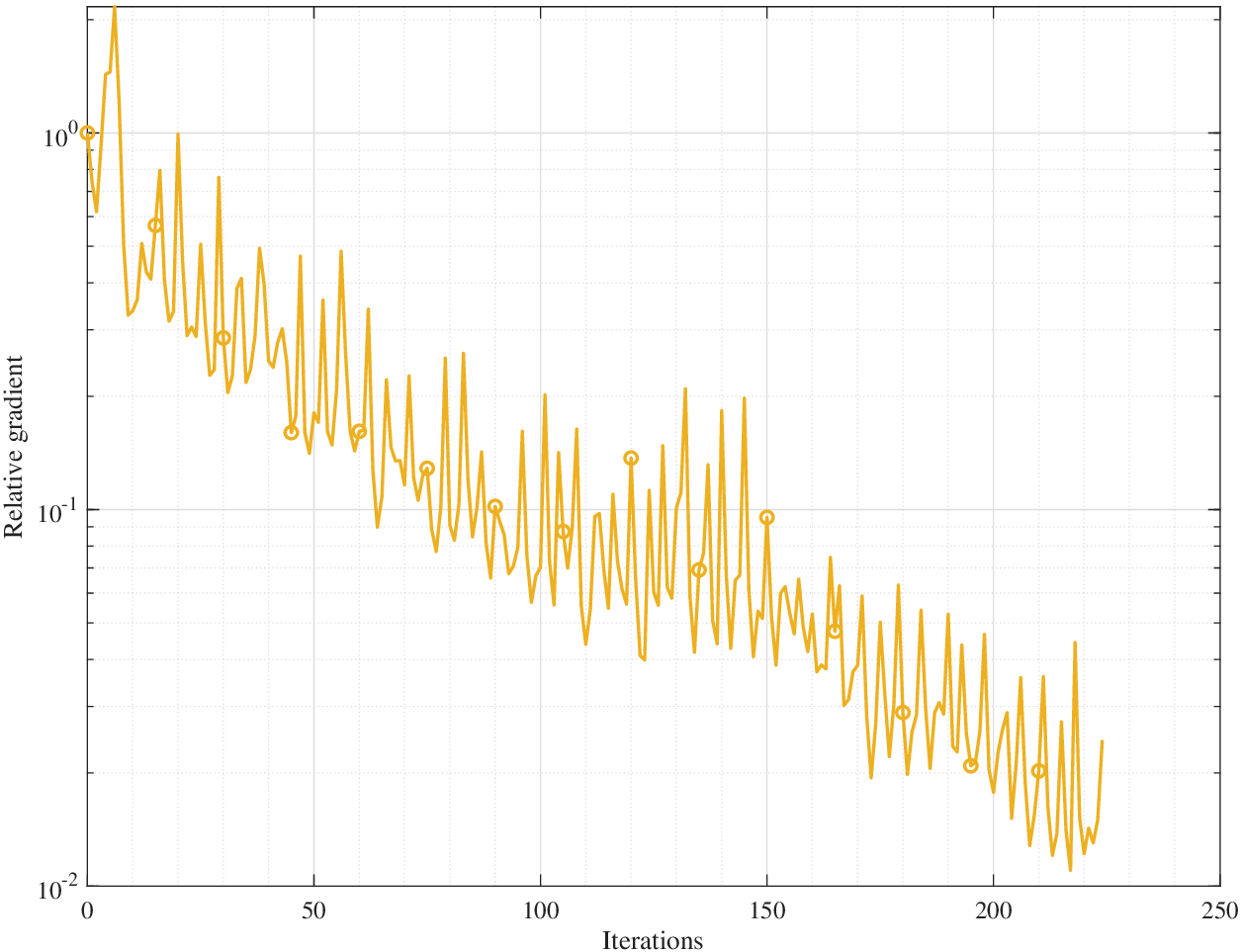}
        \caption*{(b)}
    \end{minipage}
    \caption{Convergence history of the LRBFGS algorithm: (a) objective function value ($\mathcal{H}_2$ error squared); (b) relative Riemannian gradient norm $\|\text{grad} f(x_k)\| / \|\text{grad} f(x_0)\|$.}
    \label{fig1}
\end{figure}

The convergence behavior of Algorithm \ref{alg1} is illustrated in Fig.\ref{fig1}.
The algorithm exhibits a stable and efficient descent trajectory, converging after $225$ iterations.
At termination, the relative Riemannian gradient norm falls below $10^{-2}$.
Notably, the algorithm reduces the $\mathcal{H}_2$-norm from $1.6\times10^{-1}$ provided by BT to $8.5\times10^{-2}$, achieving a significant enhancement.
With the systems driven by a two-dimensional input signal $u(t) = [t^2 e^{-0.2t}, \ 0.5 \cos(\pi t) + 1]$, time-domain comparision of the original and reduced-order systems is provided.
The corresponding output trajectories and the relative errors are depicted in Fig.\ref{fig2}.
Shown in Fig.\ref{fig2}(a), both methods provide reduced-order models that accurately capture the dynamic behavior of the FOM.
However, Fig.\ref{fig2}(b) clearly indicates that the model optimized by our proposed algorithm almost consistently maintains a lower error. 
\par

\begin{figure}[ht]
    \centering
    \begin{minipage}{0.45\textwidth}
        \centering
        \includegraphics[width=\textwidth]{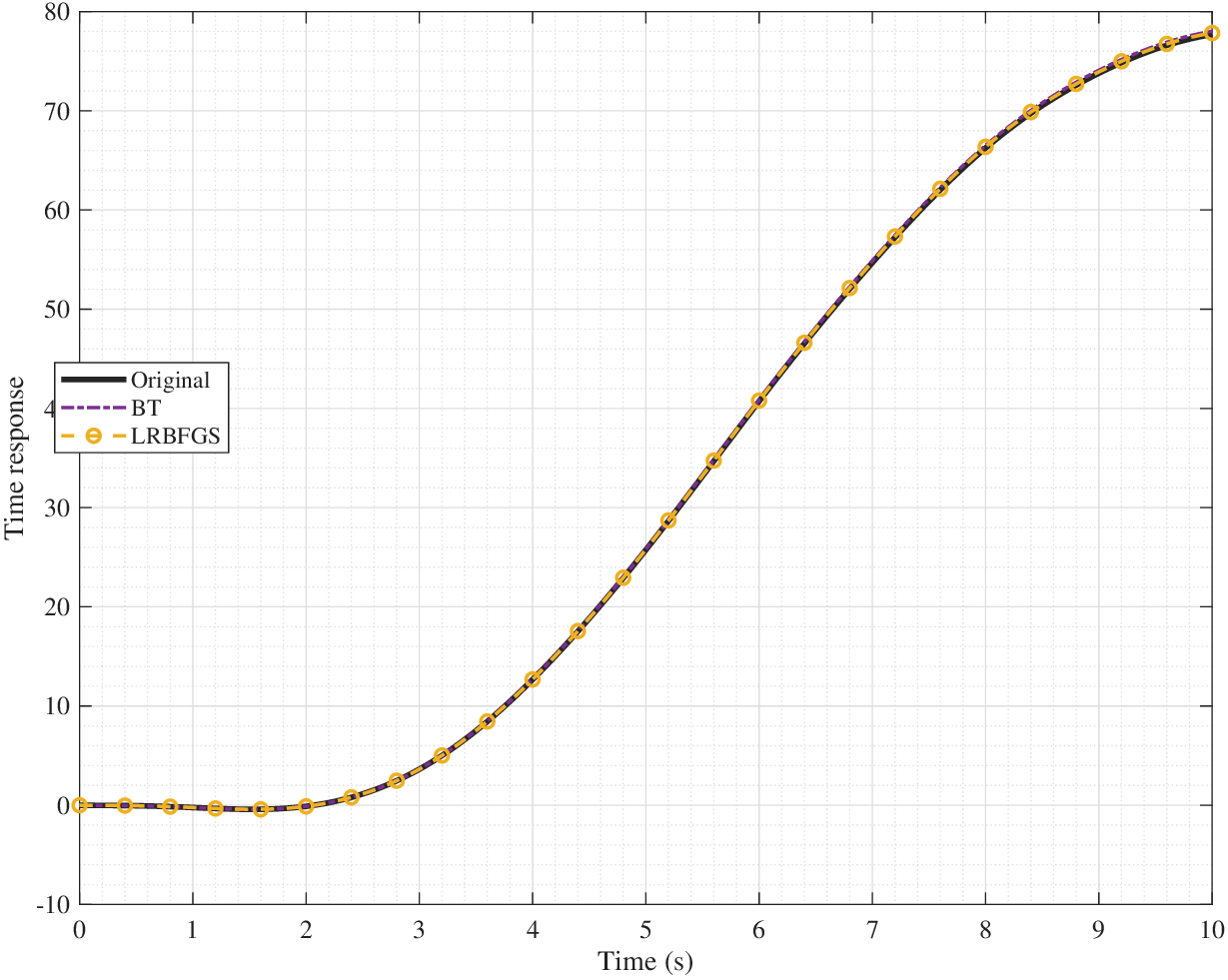}
        \caption*{(a)}
    \end{minipage}
    \hfill
    \begin{minipage}{0.45\textwidth}
        \centering
        \includegraphics[width=\textwidth]{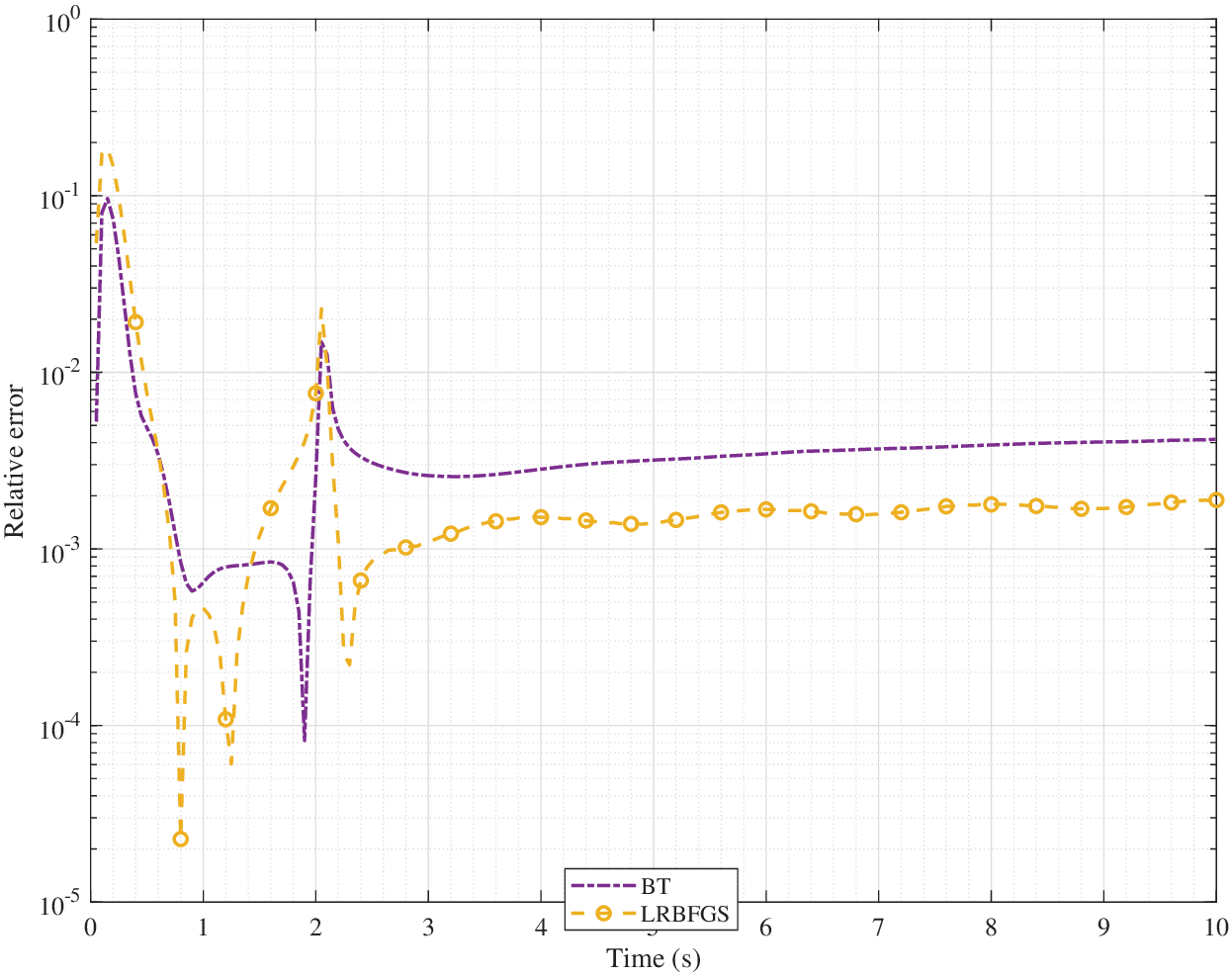}
        \caption*{(b)}
    \end{minipage}
    \caption{Performance comparison of the reduced-order models: (a) transient time responses of the original and reduced systems; (b) instantaneous relative error $|y(t) - \hat{y}(t)| / |y(t)|$.}
    \label{fig2}
\end{figure}

We also consider the cases where the reduced order is set to $r=6$ and $r=14$, with the same parameters and stopping criteria.
The $\mathcal{H}_2$-norms of the corresponding reduced models, alongside the previously discussed case of $r=10$, are summarized in Tabel.\ref{tab1}.
In all evaluated cases, our method is capable of further minimizing the $\mathcal{H}_2$ error, yielding ROMs with superior accuracy.
\par

\begin{table}[ht]
    \centering
    \caption{Comparison of $\mathcal{H}_2$ errors of models obtained via BT and LRBFGS under different reduced orders $r$.}
    \label{tab1}
    \begin{tabular}{c c @{\hspace{0.8cm}} c}
        \toprule
        Reduced order ($r$) & BT & LRBFGS \\
        \midrule
        $6$  & $6.7018 \times 10^{-1}$ & $3.0167 \times 10^{-1}$ \\
        $10$ & $1.6080 \times 10^{-1}$ & $8.5226 \times 10^{-2}$ \\
        $14$ & $3.0300 \times 10^{-2}$ & $1.8339 \times 10^{-2}$ \\
        \bottomrule
    \end{tabular}
\end{table}

\section{Conclusion}

We have studied the $H_2$-optimal reduction of LQO systems based on the production 
manifold. Minimizing the $H_2$ error of MOR is formulated as a Riemannian 
optimization problem, and the Riemannian BFGS method is employed to solve the 
optimization problem iteratively. The resulting reduced models are stable and 
preserve the quadratic structure of the original systems. 
As the coefficient matrices of reduced models are 
selected directly as the optimization variables, the amount of variables is reduced 
dramatically compared to the existing projection methods. The simulation results 
indicate that our approach can provide accurate approximation to high-order systems.

\addcontentsline{toc}{section}{Reference}
\markboth{Reference}{}
\bibliographystyle{elsarticle-num-names}
\bibliography{reference}

\end{document}